\newtheorem{theorem}{Theorem}
\newtheorem{lemma}[theorem]{Lemma}
\newtheorem{proposition}[theorem]{Proposition}
\newtheorem{example}{Example}
\newtheorem{assumption}{Assumption}
\newcommand{\proofname}{Proof}
\title{\LARGE\bf Finite Data-Rate Feedback Stabilization of Continuous-Time Switched Linear Systems with Unknown Switching Signal}
\author{Guillaume~O.~Berger and Rapha\"el~M.~Jungers%
\thanks{G.~Berger is a FNRS/FRIA Fellow.
R.~Jungers is a FNRS Research Associate.
He is supported by the Walloon Region and the Innoviris Foundation.
Both are with ICTEAM institute, UCLouvain, Louvain-la-Neuve, Belgium.
\texttt{\{guillaume.berger,raphael.jungers\}@uclouvain.be}.}}
\newcommand{\new}{\color{blue}}
\newcommand{\extended}[1]{{\color{orange}#1}}
\newcommand{\short}[1]{{\color{ForestGreen}#1}}
\renewcommand{\extended}[1]{#1}
\renewcommand{\short}[1]{#1}
\renewcommand{\short}[1]{}
\newcounter{changes}
\renewcommand{\Re}{\mathbb{R}}
\newcommand{\NNb}{\mathbb{N}}
\newcommand{\mynorm}{\lVert\cdot\rVert}
\newcommand{\nint}{\mathrm{int}}
\newcommand{\diff}{\mathrm{d}}
\newcommand{\calE}{\mathcal{E}}
\newcommand{\calK}{\mathcal{K}}
\newcommand{\calQ}{\mathcal{Q}}
\newcommand{\calU}{\mathcal{U}}
\newcommand{\dd}{{d\times d}}
\newcommand{\xh}{\hat{x}}
\newcommand{\mh}{\hat{m}}
\newcommand{\setMatrix}{\small\arraycolsep=0.5\arraycolsep}
\newcommand{\nmissed}{N^{\mathrm{sw}}}
\newcommand{\sigmah}{\hat{\sigma}}
\newcommand{\epsb}{\bar{\varepsilon}}
\newcommand{\xt}{\tilde{x}}
\newcommand{\alphab}{\bar{\alpha}}
\newcommand{\rhob}{\bar{\rho}}
\newcommand{\indfunc}{\mathbf{1}}
\begin{document}

\maketitle
\thispagestyle{empty}
\pagestyle{empty}

%%%%%%%%%%%%%%%%%%%%%%%%%%%%%%%%%%%%%%%%%%%%%%%%%%%%%%%%%%%%%%%%%%%%%%%%%%%%%%%%
\begin{abstract}
In this paper, we study the problem of stabilizing switched linear systems when only limited information about the state and the mode of the system is available, which occurs in many applications involving networked switched systems (such as cyber-physical systems, IoT, etc.).
First, we show that switched linear systems with arbitrary switching, i.e., with no constraint on the switching signal, are in general not stabilizable with a finite data rate.
Then, drawing on this result, we restrict our attention to systems satisfying a fairly mild slow-switching assumption, in the sense that the switching signal has an average dwell time bounded away from zero.
We show that under this assumption, switched linear systems that are stabilizable in the classical sense remain stabilizable with a finite data rate.
A practical coder--controller that stabilizes the system is presented and its applicability is demonstrated on numerical examples.
\end{abstract}

%%%%%%%%%%%%%%%%%%%%%%%%%%%%%%%%%%%%%%%%%%%%%%%%%%%%%%%%%%%%%%%%%%%%%%%%%%%%%%%%
\section{Introduction}\label{sec-intro}

This paper studies two important and challenging features of modern control systems: data-rate constraints and switching.
Many modern control systems (such as IoT, networked systems, etc.) involve spatially distributed components that communicate through a shared, digital communication network, that can carry only a finite amount of information per unit of time.
This limitation on the information flow can have large negative effects on the performance of the control loop.
This has motivated a considerable amount of research to study control problems subject to data-rate constraints, as surveyed in \cite{hespanha2007asurvey,matveev2009estimation,zhongping2013quantized}.
On the other hand, many systems encountered in practice involve switching between different operation modes; e.g., due to the interaction of physical processes and digital devices (as in cyber-physical systems), external influences (e.g., human in the loop), discontinuous dynamics (e.g., physical processes with impact), the nature of the controller (e.g., logic-dynamic controllers), etc.
Control problems involving switching have also attracted a lot of attention from the control community in recent years; see, e.g., the surveys \cite{van2000anintroduction,liberzon2003switching,shorten2007stability}, and the references therein.

This paper focuses on the problem of limited data-rate stabilization of continuous-time Switched Linear Systems (SLSs).
These are systems described by a finite set of linear modes, among which the system can switch in time (see Figure~\ref{fig-coder-controller-switched} for a representation).
As paradigmatic examples of hybrid and cyber-physical systems, SLSs naturally appear in many engineering applications, or as abstractions of more complex dynamical systems \cite{lin2009stability,jungers2009thejoint,sun2011stability}.

\begin{figure}
\centering
\includegraphics[width=0.95\linewidth]{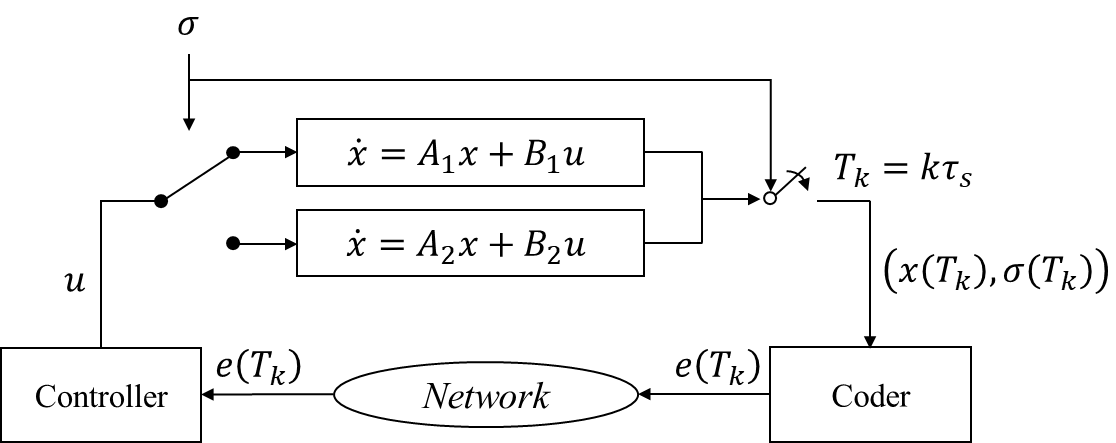}
\caption{Control of switched linear systems over limited, digital communication networks.}
\label{fig-coder-controller-switched}
\vspace{-10pt}
\end{figure}

Although control of switched systems and control with data-rate constraints have been two active areas of research for some time now, the study of control problems involving switching and data-rate constraints simultaneously seems to have not received much attention so far.
(Some work has been devoted to the stabilization of Markov jump linear systems with data-rate constraints \cite{nair2003infimum,zhang2009stabilization,ling2010necessary,xiao2010stabilization}.
However, the information structure considered in these references implies that the mode of the system is always known to the controller, so that the problems of switching and state estimation with limited information can be treated separately.)
Combining these two aspects in a unified framework is however essential if we want to address control problems encountered in a wide range of applications involving networked switched systems, which generally imply that the controller has limited information on both the state \emph{and} the mode of the system.

When both state observation and switching signal observation are subject to data-rate constraints, the problems of state estimation and switching are intrinsically coupled.
For instance, state encoding strategies must take into account the fact that unobserved switching may occur during the sampling interval.
In particular, we will see that switched systems with unconstrained switching signal have in general an \emph{infinite stabilization entropy}, meaning that they are not stabilizable with any finite data rate.
On the other hand, it is a standard technique in stability and stabilizability analysis of switched systems to impose \emph{slow-switching conditions}---generally described by a dwell time and/or an average dwell time (ADT) \cite{hespanha1999stability,liberzon2003switching}---on the switching signal to reduce its expressiveness.
Recently, these techniques were used in the context of limited data-rate control of SLSs \cite{liberzon2014finite,yang2017feedback,wakaiki2014output}.

Our work draws upon these references, especially \cite{liberzon2014finite}, for the formulation of the problem of interest, namely the stabilization of SLSs under data-rate constraints and subject to slow-switching assumptions.
However, we consider different objectives regarding the design of a control strategy with finite data rate: while in \cite{liberzon2014finite,yang2017feedback,wakaiki2014output}, the ADT is used as a design parameter to ensure stabilization of the system with data-rate constraints, our goal here is to study the limited data-rate stabilization of SLSs with \emph{arbitrary} ADT.

The contribution of this paper is twofold.
First, we show that SLSs with \emph{no constraint} on the switching signal, are in general not stabilizable with a finite data rate.
We present an example of a SLS that is feedback stabilizable for any switching signal in the absence of data-rate constraints, but cannot be stabilized with a finite data rate.
This motivates the introduction of slow-switching assumptions in order to make the problem of limited data-rate stabilization of SLSs tractable, as otherwise the coder cannot transmit information fast enough to the controller to achieve stabilization.

Secondly, we show that under a fairly mild slow-switching assumption on the switching signal, SLSs that are stabilizable in the classical sense remain stabilizable with a finite data rate.
More precisely, we show that any stabilizable (without data-rate constraints) SLS with ADT bounded away from zero can be stabilized by a coder--controller with finite data rate.
We present a sufficient upper bound on the data rate depending on the system and the ADT, and we describe the implementation of a coder--controller that stabilizes the system.
We stress out that in our analysis (unlike \cite{liberzon2014finite,yang2017feedback,wakaiki2014output}) the ADT is fixed, so that the controller has no influence on the value of the ADT.

The paper is organized as follows.
The problem of interest, including the definition of SLSs, the basic assumptions on the system and the concept of coder--controller, is formulated in Section~\ref{sec-problem}.
Our main results are stated in Section~\ref{sec-main-results}.
Then, in Section~\ref{sec-coder-controller}, we describe the implementation of a coder--controller that stabilizes the system.
Finally, in Section~\ref{sec-numerical}, we illustrate the usage of the coder--controller with a numerical example.

\emph{Notation}.
For vectors, $\mynorm$ denotes the Euclidean $2$-norm, and for matrices it denotes the associated matrix norm (i.e., $\lVert M\rVert=$ largest singular value of $M$).
$B(\xi,r)$ is the closed ball centered at $\xi\in\Re^d$ with radius $r\geq0$.
% $\lceil\cdot\rceil$ denotes the \emph{ceil} operator.
% If $A$ and $B$ are sets, then $B^A$ denotes the set of functions from $A$ to $B$.
If $f:A\to B$, and $A'\subseteq A$, then $f\vert_{A'}$ denotes the restriction of $f$ to the domain $A'$.
A function $g:\Re_{\geq0}\to\Re_{\geq0}$ is of class-$\calK$ if it is strictly increasing, continuous and $g(0)=0$.

%%%%%%%%%%%%%%%%%%%%%%%%%%%%%%%%%%%%%%%%%%%%%%%%%%%%%%%%%%%%%%%%%%%%%%%%%%%%%%%%
\section{Problem formulation}\label{sec-problem}

%%%%%%%%%%%%%%%%%%%%%%%%%%%%%%%%%%%%%%%%%%%%%%%%%%%%%%%%%%%%%%%%%%%%%%%%%%%%%%%%
\subsection{Switched linear systems}\label{ssec-SLSs}

Consider a continuous-time \emph{Switched Linear System} (SLS) with affine control input:
\begin{equation}\label{eq-SLS-cont}
\dot{x}(t) = A_{\sigma(t)}x(t) + B_{\sigma(t)}u(t), \quad x(0)\in K, \quad t\geq0,
\end{equation}
where $\sigma(t)\in\Sigma\coloneqq\{1,\ldots,N\}$ and $u(t)\in\Re^c$, $A_i\in\Re^\dd$ and $B_i\in\Re^{d\times c}$ for all $i\in\Sigma$, and $K\subseteq\Re^d$ is a compact set with $0\in\nint(K)$.
The function $\sigma:\Re_{\geq0}\to\Sigma$ is called the \emph{switching signal} (or \emph{s.s.} for short) and is assumed to be piecewise constant and right-continuous.

The discontinuity points of $\sigma$ are  called the ``switching times'' or simply ``switches''.
For $t\geq s\geq 0$, we let $N_\sigma(t,s)$ be the number of switches of $\sigma$ in the interval $[s,t)$.

As we will see in Subsection~\ref{ssec-arbitrary-switching}, SLSs under arbitrary switching are in general not tractable for the problem of stabilization with limited data rate.
Therefore, in our analysis, we make the assumption that the system is not switching too fast, in the following sense:

\begin{assumption}\label{ass-ADT}
There is $\tau_a>0$, called the \emph{Average Dwell Time} (ADT), and a constant $N_0\geq0$ such that
\[
N_\sigma(t,s)\leq N_0 + \frac{t-s}{\tau_a} \qquad \forall\,t\geq s\geq 0.
\]
The parameter $N_0$ is fixed but not known by the controller a priori.
\end{assumption}

The concept of ADT, introduced in \cite{hespanha1999stability}, has become a standard assumption in the study of stability and stabilizability of switched and hybrid systems \cite{liberzon2003switching,sun2011stability}.
It has also received attention in the context of control of switched systems with limited information \cite{zhang2009stabilization,liberzon2014finite,yang2017feedback,wakaiki2014output}.

Our goal is to stabilize \eqref{eq-SLS-cont} under data-rate constraints.
Clearly, a necessary condition is that the system is stabilizable in the absence of data-rate constraints.
Hence, we make the following assumption on the stabilizability of \eqref{eq-SLS-cont}:

\begin{assumption}\label{ass-feedback-stabilizable}
There is a \emph{feedback law} $\varphi:\Re^d\times\Sigma\to\Re^c$, positively homogeneous in the $1$st argument and piecewise continuous, and constants $D\geq0$ and $\mu_1,\mu_2>0$ such that (i) $\mu_1/\tau_a<\mu_2$, and (ii) for every s.s.\ $\sigma$, the solution of the closed-loop system $\dot{x}(t)=A_{\sigma(t)}x(t)+B_{\sigma(t)}\varphi(x(t),\sigma(t))$ satisfies
\begin{equation}\label{eq-feedback-stabilizable-convergence}
\lVert x(t)\rVert \leq D\lVert x(0)\rVert\,e^{\mu_1N_\sigma(t,0)-\mu_2t}\qquad \forall\,t\geq 0.
\end{equation}
We assume that the feedback law $\varphi(\cdot,\cdot)$ and the parameters $D,\mu_1,\mu_2$ are fixed and known by the controller.
\end{assumption}

Assumption~\ref{ass-feedback-stabilizable} implies that the closed-loop system \eqref{eq-SLS-cont} with feedback control input $u(t)=\varphi(x(t),\sigma(t))$ is asymptotically stable, for every s.s.\ satisfying Assumption~\ref{ass-ADT}.
The existence of a feedback control law satisfying \eqref{eq-feedback-stabilizable-convergence} can be ensured for instance if the system admits a \emph{multiple Control Lyapunov Function} (CLF) \cite{liberzon2003switching,lin2009stability}.
% This approach was used in \cite{liberzon2014finite,yang2017feedback,wakaiki2014output} to derive a sufficient lower bound on the ADT to stabilize SLSs under data-rate constraints.
An interesting situation, on which we will come back in  Subsection~\ref{ssec-comparison}, is when the system admits a \emph{common} CLF; in this case, \eqref{eq-feedback-stabilizable-convergence} is satisfiable with $\mu_1=0$, so that Assumption~\ref{ass-feedback-stabilizable} holds for any $\tau_a>0$.

% Condition (ii) expresses that when there is no switch, the norm of the state decreases \emph{asymptotically} exponentially with rate $-\mu_2$.
% However, after a switch, the norm of the state can increase by a factor up to $e^{\mu_1}$.\footnote{This does not mean that there is a jump in the state but simply reflects the fact that since the dynamic of the system changed, the norm of the state can increase momentarily before returning to an exponential decrease.}
% Finally, condition (i), together with Assumption~\ref{ass-ADT} on $N_\sigma(\cdot,0)$, implies that the system with feedback control law $\varphi(\cdot,\cdot)$ is asymptotically stable.

%%%%%%%%%%%%%%%%%%%%%%%%%%%%%%%%%%%%%%%%%%%%%%%%%%%%%%%%%%%%%%%%%%%%%%%%%%%%%%%%
\subsection{Coder--controller}\label{ssec-coder-controller}

We investigate the problem of stabilizing system \eqref{eq-SLS-cont} when direct observation of the system is not possible.
Information about the mode and the state of the system will thus be delivered by a \emph{coder} connected to a \emph{controller} via a digital channel that can carry only a finite amount of information per unit of time.
The situation is depicted in Figure~\ref{fig-coder-controller-switched}.
At periodic sampling times $T_k\coloneqq k\tau_s$, $k=0,1,2,3,\dots$, the coder observes the current state and mode of the system, and sends one discrete-valued symbol $e(T_k)$, selected from a finite coding alphabet $\calE_k$, to the controller.
Neglecting transmission errors and delay, at time $T_k$ the controller has the symbols $e(0),\ldots,e(k)$ available and it generates a control input $u(\cdot)$ for the coming epoch $[T_k,T_{k+1})$.

More precisely, let $\tau_s>0$ be a sampling period and $(\calE_k)_{k\in\NNb}$ a sequence of coding alphabets.
The symbol sent by the coder at time $T_k\coloneqq k\tau_s$ is defined by
\begin{equation}\label{eq-symbol-coder-controller}
e(T_k) = \gamma_k(x(T_0),\ldots,x(T_k);\sigma(T_0),\ldots,\sigma(T_k)),
\end{equation}
where $\gamma_k: (\Re^d)^k \times \Sigma^k \to \calE_k$ is the coder function at time $T_k$ and $x(\cdot)$ is the state of the system.
Assuming the channel is noiseless and without delay, at time $T_k$ the controller has the symbols $e(T_0),\ldots,e(T_k)$ available and it generates the input function $u(\cdot)$ defined on the interval $[T_k,T_{k+1})$ by:
\begin{equation}\label{eq-input-coder-controller}
u(\cdot)\vert_{[T_k,T_{k+1})} = \zeta_k(e(T_0),\ldots,e(T_k)),
\end{equation}
where $\zeta_k : \calE_0\times\ldots\times\calE_k\to (\Re^c)^{[T_k,T_{k+1})}$ is the controller function at time $T_k$, and it is assumed that $u(\cdot)\vert_{[T_k,T_{k+1})}$ is integrable (i.e., $L^1$).
Let $\gamma=(\gamma_k)_{k\in\NNb}$ and $\zeta=(\zeta_k)_{k\in\NNb}$.
The pair $(\gamma,\zeta)$ is called a \emph{coder--controller}.
% (note that $\tau_s$ and $(\calE_k)_{k\in\NNb}$ are defined implicitly in $\gamma$).

At each time $T_k$, the symbol $e(T_k)$ is transmitted via a limited communication channel.
Using binary representation of the symbols, the \emph{averaged communication data rate} (or \emph{data rate} for short) [in bits per unit of time] of the coder--controller is given by
\[
% \textstyle
R(\gamma,\zeta) = \limsup_{k\to\infty}\,\frac{1}{k\tau_s}\,{\sum}_{j=0}^{k-1}\,\log_2\,\lvert\calE_j\rvert.
\]

In modern applications involving networked systems, the communication capacity of the network is generally important.
However, there are usually many resources competing for the same bandwidth (as for instance in IoT applications).
Therefore, it is essential to have coding--controlling strategies with an averaged communication data rate as small as possible.
The question of determining the smallest data rate that is needed to achieve a given control task is also intriguing from the theoretical point of view.

%%%%%%%%%%%%%%%%%%%%%%%%%%%%%%%%%%%%%%%%%%%%%%%%%%%%%%%%%%%%%%%%%%%%%%%%%%%%%%%%
\section{Main Results}\label{sec-main-results}

%%%%%%%%%%%%%%%%%%%%%%%%%%%%%%%%%%%%%%%%%%%%%%%%%%%%%%%%%%%%%%%%%%%%%%%%%%%%%%%%
\subsection{Finite data-rate stabilization}

The control objective studied in this paper is the stabilization of system \eqref{eq-SLS-cont}, defined in Subsection~\ref{ssec-SLSs}, under data-rate constraints.
More precisely, we want to show the existence of a coder--controller, as described in Subsection~\ref{ssec-coder-controller}, with finite data rate, that stabilizes the system:

\begin{theorem}\label{thm-coder-controller-stabilizes}
Consider system \eqref{eq-SLS-cont} and let Assumptions~\ref{ass-ADT}--\ref{ass-feedback-stabilizable} hold.
There is a coder--controller $(\gamma,\zeta)$ with $R(\gamma,\zeta)<\infty$ that satisfies the following properties:\footnote{A coder--controller that satisfies these properties will be said to \emph{stabilize} the system.} there is $\lambda>0$ and a class-$\calK$ function $g(\cdot)$ such that every trajectory $x(\cdot)$ of \eqref{eq-SLS-cont} with control input $u(\cdot)$ defined by \eqref{eq-symbol-coder-controller}--\eqref{eq-input-coder-controller} satisfies
\[
\lVert x(t)\rVert\leq g(\lVert x(0)\rVert)\,e^{-\lambda t}\qquad \forall\,t\geq0.
\]\vskip0pt
\end{theorem}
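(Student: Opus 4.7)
The plan is to construct a coder--controller built around a quantized state observer running in parallel at both ends of the channel. At each sampling time $T_k$ the coder transmits two bundles of bits: (i) the current mode $\sigma(T_k)\in\Sigma$, costing at most $\lceil\log_2 N\rceil$ bits; and (ii) a quantized correction $q_k$ of the prediction residual $x(T_k)-\hat x(T_k^-)$, using $b$ bits per coordinate inside a bounding box whose side $\delta_k$ is a deterministic function of the past channel outputs (so both endpoints know it). The common prediction $\hat x$ is obtained by integrating the closed-loop system with the last announced mode, $\dot{\hat x}=A_{\sigma(T_{k-1})}\hat x+B_{\sigma(T_{k-1})}\varphi(\hat x,\sigma(T_{k-1}))$, over $[T_{k-1},T_k)$; after reception the controller sets $\hat x(T_k^+)=\hat x(T_k^-)+q_k$ and applies $u(t)=\varphi(\hat x(t),\sigma(T_k))$ for $t\in[T_k,T_{k+1})$.

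Next I would analyze jointly the decay of the estimate $\hat x$ and of the residual $e(t):=x(t)-\hat x(t)$. The estimate is driven by a \emph{constant-mode} closed-loop system whose solution satisfies, directly from Assumption~\ref{ass-feedback-stabilizable} with zero switches, $\|\hat x(T_{k+1}^-)\|\leq De^{-\mu_2\tau_s}\|\hat x(T_k^+)\|$. The residual obeys a linear ODE whose homogeneous coefficient is bounded by $L:=\max_i\|A_i\|$ and whose forcing term, present only on the subset of $[T_k,T_{k+1})$ where the true mode $\sigma$ disagrees with the assumed $\sigma(T_k)$, is bounded in terms of $\|\hat x\|$ by a constant $C$ depending on $\{A_i,B_i,\varphi\}$ times the number of unobserved switches $n_k:=N_\sigma(T_{k+1},T_k)$. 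Combining these gives $\|e(T_{k+1}^-)\|\leq e^{L\tau_s}\|e(T_k^+)\|+Cn_k\|\hat x(T_k^+)\|$ with an analogous expansion bound for $\|\hat x(T_{k+1}^-)\|$ involving a multiplicative $e^{\mu_1 n_k}$ factor.

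The quantization step then shrinks the residual by a fixed factor of order $2^{-b}$, so the joint evolution of the pair $(\|e(T_k^+)\|,\|\hat x(T_k^+)\|)$ is governed by an $n_k$-dependent $2\times 2$ matrix. Picking $b$ large enough that $2^{-b}e^{L\tau_s}$ is strictly less than $1$, and averaging over consecutive samples via Assumption~\ref{ass-ADT} (which yields $\sum_{k=0}^{K-1}n_k\leq N_0+K\tau_s/\tau_a$), the per-sample contraction factor on $\|\hat x(T_k^+)\|$ becomes essentially $e^{(-\mu_2+\mu_1/\tau_a)\tau_s}$, which is $<1$ thanks to the hypothesis $\mu_1/\tau_a<\mu_2$. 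A standard Gr\"onwall-type argument then yields $\|x(T_k)\|\leq g_0(\|x(0)\|)e^{-\lambda k\tau_s}$ for some $\lambda>0$ and $g_0\in\calK$, and bounding the continuous-time evolution on each sampling interval extends this to the claimed $\|x(t)\|\leq g(\|x(0)\|)e^{-\lambda t}$. The resulting data rate is $R=(\lceil\log_2 N\rceil+bd)/\tau_s<\infty$.

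The main obstacle, and where most of the work lies, is precisely the coupling between state and mode estimation inside a sampling interval: the coder has no information about which modes are visited on $(T_{k-1},T_k)$, only about $\sigma(T_k)$, so bursts of switches allowed by the $N_0$ slack in Assumption~\ref{ass-ADT} can inflate both $\hat x$ and the residual beyond the quantizer's fixed range. Handling this requires an adaptive ``zoom-out'' mechanism that enlarges $\delta_k$ on demand---consuming a fixed, finite number of additional bits---until the averaged contraction given by the ADT dominates the initial transient. That transient is precisely what is absorbed into the class-$\calK$ function $g$ of the theorem, and the strict inequality $\mu_1/\tau_a<\mu_2$ in Assumption~\ref{ass-feedback-stabilizable} is what makes the average decay survive the per-switch expansion of both the residual and the predicted trajectory.
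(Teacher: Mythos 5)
Your overall strategy---a quantized observer synchronized at both ends of the channel, a reference trajectory driven by the sampled mode and the feedback law of Assumption~\ref{ass-feedback-stabilizable}, and averaging of the per-interval mode-mismatch terms via Assumption~\ref{ass-ADT}---is the same as the paper's. But there is a structural gap that makes the scheme fail as written: you reset the reference trajectory $\hat x$ with a quantized correction at \emph{every} sampling time $T_k$, so the bound \eqref{eq-feedback-stabilizable-convergence} can only be applied on each interval $[T_k,T_{k+1})$ separately, and iterating $\lVert\hat x(T_{k+1}^-)\rVert\le De^{-\mu_2\tau_s}\lVert\hat x(T_k^+)\rVert$ accrues a factor $D^K$ after $K$ steps. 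Since $D\ge1$ is an overshoot constant, this diverges unless $\tau_s>\ln D/\mu_2$; but with $\tau_s$ pinned at that value the mode information is stale for a long interval, the mismatch forcing (of order $e^{\nu\tau_s}\tau_s(\Delta_1+\Delta_2L)D\lVert\hat x\rVert$ on each interval containing a switch, a fraction $\min(1,\tau_s/\tau_a)$ of them on average) has no remaining free parameter to be made smaller than the decay margin $1-De^{-\mu_2\tau_s}$. Note also that the correction $q_k$ added to $\hat x$ has the size of the \emph{pre}-quantization residual, so increasing $b$ does not damp this coupling; ignoring that is exactly the error that would ``prove'' stabilizability of the arbitrarily-switching example of Proposition~\ref{pro-arbitrary-not-tractable}. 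The paper resolves this with a two-time-scale structure: the mode is transmitted every $\tau_s$ but the state is quantized only every $n\tau_s$, and crucially the reference trajectory is \emph{continued}, not reset, across the $n$ sub-intervals, so that \eqref{eq-feedback-stabilizable-convergence} is invoked once per block of length $n\tau_s$ (absorbing $D$ into $De^{-\mu_2 n\tau_s}$), while the mismatch term carries the factor $\tau_s\cdot\frac{n\tau_s}{\tau_a}$ appearing in $\varepsilon(n,\tau_s)$ and can be made arbitrarily small by letting $n\to\infty$ with $n\tau_s$ fixed; this is precisely the content of condition \eqref{eq-parameters}.

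Two further points are asserted but not established. First, your ``zoom-out consuming a fixed, finite number of additional bits'': since $N_0$ is unknown, the number of switches in a sampling interval is not bounded a priori, so the controller cannot reconstruct the coder's bounding box from past outputs alone, and you must show that the resynchronizing side information lives in a \emph{bounded} alphabet. The paper does this by transmitting not the number of switches but the number $\nmissed_k\le n$ of sub-intervals containing at least one switch (only the measure of the mode-mismatch set matters), which costs $\log_2(n+1)$ bits per block. Second, the class-$\calK$ property of $g$ near the origin does not follow from the geometric recursion: with the initial box $K\subseteq B(0,r_0)$ fixed, the first quantization error is of order $\alpha r_0$ no matter how small $\lVert x(0)\rVert$ is, so Lyapunov stability requires the dead-zone property of the quantizer ($Q(\xi)=0$ for small $\lVert\xi\rVert$, item (ii) of Lemma~\ref{lem-m-point-quantizer}) together with $\varphi(0,i)=0$, as in the quantized-stabilization argument invoked at the end of Section~\ref{sec-coder-controller}.
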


We will provide a constructive proof of Theorem~\ref{thm-coder-controller-stabilizes}.
More precisely, in Section~\ref{sec-coder-controller}, we describe the implementation of a finite data-rate coder--controller achieving stabilization.
A precise upper bound on the data rate will be derived in due course of the description of the coder--controller; see \eqref{eq-data-rate-coder-controller} in Section~\ref{sec-coder-controller}.
As for the decay rate $\lambda$, it will be obtained in the proof that the proposed coder--controller stabilizes the system; \extended{see \eqref{eq-lambda-def} in Section~\ref{sec-coder-controller}}\short{see Eq.~\eqref{eq-lambda-def} in the extended version of this work \cite{berger2020finite}}.
The function $g(\cdot)$, however, will not be explicitly characterized but its existence will be demonstrated.
As a class-$\calK$ function, $g(\cdot)$ satisfies $g(r)\to0$ when $r\to0$.
However, as it will be clear from the proof of its existence, $g(r)$ is not Lipschitz continuous at $r=0$.
This lack of regularity is not due to potential sub-optimality of the proposed coder--controller, or to the switching nature of the system, but is intrinsic to \emph{any} finite-data-rate stabilization scheme for linear systems (including LTI systems); see for instance \cite[Proposition~2.2]{colonius2012minimal}.

%%%%%%%%%%%%%%%%%%%%%%%%%%%%%%%%%%%%%%%%%%%%%%%%%%%%%%%%%%%%%%%%%%%%%%%%%%%%%%%%
\subsection{Necessity of the average dwell time}\label{ssec-arbitrary-switching}

In this subsection, we would like to stress out the importance of Assumption~\ref{ass-ADT} in Theorem~\ref{thm-coder-controller-stabilizes}.
Therefore, we show with a simple example that SLSs under \emph{arbitrary} switching (i.e., with ADT equal to zero) are in general not stabilizable with a finite data rate:

\begin{example}\label{exa-arbitrary-not-tractable}
Consider system \eqref{eq-SLS-cont} with $d=1$, $\Sigma=\{1,2\}$, and matrices $A_1 = A_2 = 0$, $B_1 = -1$ and $B_2 = 1$.
This system is somehow the most basic affine-controlled SLS, and it clearly satisfies Assumption~\ref{ass-feedback-stabilizable} with $\mu_1=0$: for instance, take $\varphi(x(t),\sigma(t))=-B_{\sigma(t)}x(t)$.
\end{example}

\begin{proposition}\label{pro-arbitrary-not-tractable}
System \eqref{eq-SLS-cont}, with $A_\Sigma$ and $B_\Sigma$ as in Example~\ref{exa-arbitrary-not-tractable} and under arbitrary switching, is not stabilizable by a coder--controller with finite data rate.
\end{proposition}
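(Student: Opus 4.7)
The plan is to exhibit, for any candidate coder--controller $(\gamma,\zeta)$ and any nonzero initial state, an adversarial switching signal under which the state fails to converge to zero; the argument does not actually use finiteness of the alphabets, only the sampled-data structure of the coder--controller. The decisive observation is that on each sampling interval $[T_k,T_{k+1})$ the control $u(\cdot)$ is fully determined at time $T_k$ by the past symbols $e(T_0),\ldots,e(T_k)$, and is therefore \emph{independent} of the restriction of $\sigma$ to the open interval $(T_k,T_{k+1})$. Since $B_{\sigma(\tau)}u(\tau)\in\{-|u(\tau)|,+|u(\tau)|\}$ according to whether $\sigma(\tau)\in\{1,2\}$ matches the sign of $u(\tau)$, the adversary may choose $\sigma$ on $(T_k,T_{k+1})$ \emph{after} seeing the committed $u$ and steer $\int_{T_k}^{T_{k+1}}B_{\sigma(\tau)}u(\tau)\,\diff\tau$ arbitrarily close to $+\!\int|u|\,\diff\tau$ or $-\!\int|u|\,\diff\tau$ at will.

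My first step would be an approximation lemma: for every integrable $u:[T_k,T_{k+1})\to\Re$, every sign $s\in\{-1,+1\}$, and every tolerance $\eta>0$, there exists a piecewise constant $\sigma:[T_k,T_{k+1})\to\{1,2\}$ with finitely many switches such that
\[
s\,{\int_{T_k}^{T_{k+1}}}B_{\sigma(\tau)}u(\tau)\,\diff\tau\,\geq\,\int_{T_k}^{T_{k+1}}|u(\tau)|\,\diff\tau-\eta.
\]
This reduces to approximating the measurable set $\{u>0\}\subseteq[T_k,T_{k+1})$ from inside by a finite union of intervals, up to $|u|\,\diff\tau$-measure smaller than $\eta/2$, which follows from inner regularity of Lebesgue measure combined with absolute continuity of the integral.

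Given the lemma, I would construct the adversarial $\sigma$ epoch by epoch. Choose $x_0\in K$ with $x_0\neq0$ (possible since $0\in\nint(K)$) and set $\eta_k=|x_0|/2^{k+2}$, so $\sum_{k\geq0}\eta_k=|x_0|/2$. Inductively, assume $\sigma$ has been constructed on $[0,T_k]$ in such a way that $|x(T_k)|\geq|x_0|-\sum_{j<k}\eta_j>|x_0|/2>0$. The coder emits $e(T_k)$ and the controller commits to $u$ on $[T_k,T_{k+1})$; I extend $\sigma$ to $(T_k,T_{k+1})$ using the approximation lemma with sign $s=\mathrm{sign}(x(T_k))$ and tolerance $\eta_k$. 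Then
\[
\mathrm{sign}(x(T_k))\cdot x(T_{k+1})\geq|x(T_k)|+\int_{T_k}^{T_{k+1}}|u|\,\diff\tau-\eta_k\geq|x(T_k)|-\eta_k>0,
\]
so $|x(T_{k+1})|\geq|x(T_k)|-\eta_k$, closing the induction. In the limit $|x(T_k)|\geq|x_0|/2>0$ for all $k$, which contradicts the decay $|x(T_k)|\leq g(|x_0|)e^{-\lambda T_k}\to0$ demanded of any stabilizing coder--controller.

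The main obstacle, as I see it, is pinning down the approximation lemma under the weak assumption that $u$ is merely integrable: one must realize $\mathrm{sign}(u)$ in a $|u|$-weighted $L^1$-sense by a signal with only finitely many switches, while still producing an overall $\sigma$ that is piecewise constant and right-continuous on all of $\Re_{\geq0}$. This is a standard measure-theoretic exercise but is the only nontrivial analytical ingredient in the argument; everything else is bookkeeping driven by the adversary's informational advantage over a sampled, non-anticipative controller.
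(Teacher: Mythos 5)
Your argument is correct, but it follows a genuinely different route from the paper's. The paper keeps the switching signal \emph{oblivious}: it fixes a family of periodic signals $\sigma_n$ oscillating ever faster between the two modes, invokes a Riemann--Lebesgue-type lemma to get $\int_0^T B_{\sigma_n(t)}u(t)\,\diff t\to0$ for each fixed $L^1$ input, and then uses the finite data rate in an essential way --- the set $\calU_T$ of inputs producible on $[0,T)$ is \emph{finite}, so a single sufficiently large $n$ defeats all of them uniformly, whatever the coder--controller observes. Your adversary is instead \emph{adaptive}: it waits until the controller has committed to $u$ on $[T_k,T_{k+1})$ and then chooses $\sigma$ on that interval to realize $\mathrm{sign}(u)$ approximately, never invoking finiteness of the alphabets. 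This proves a strictly stronger statement --- no sampled-data coder--controller of the form \eqref{eq-symbol-coder-controller}--\eqref{eq-input-coder-controller} stabilizes the system under arbitrary switching, even with infinite alphabets transmitting $x(T_k)$ and $\sigma(T_k)$ exactly --- at the price of losing the direct link to the ``finitely many inputs'' obstruction that motivates the paper's entropy narrative, and of requiring a controller-dependent destabilizing signal rather than a single explicit periodic one. Two small repairs are needed in your write-up: (i) a measurable set such as $\{u>0\}$ need not be approximable \emph{from inside} by finitely many intervals (think of a fat Cantor set); what you actually need, and what your displayed inequality uses, is approximation in \emph{symmetric difference} (Littlewood's first principle), combined with absolute continuity of $A\mapsto\int_A|u|$, which costs a factor of $2$ in the tolerance but nothing more; (ii) since $\sigma$ is right-continuous and $\sigma(T_k)$ must be fixed \emph{before} the coder observes it, your adversary is forced to hold the observed mode on some $[T_k,T_k+\delta)$, which costs at most $2\int_{T_k}^{T_k+\delta}|u|\,\diff\tau$ and can be absorbed into $\eta_k$ by taking $\delta$ small. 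Neither point affects the validity of the approach.
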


\begin{proof}
Assume the contrary and let $(\gamma,\zeta)$ be a coder--controller that stabilizes the system.
For some $T>0$ fixed, let $\calU_T$ be the set of all distinct input functions $u(\cdot)$ that can be generated by the coder--controller, i.e., by \eqref{eq-symbol-coder-controller}--\eqref{eq-input-coder-controller}, on the interval $[0,T)$.
Since $R(\gamma,\zeta)<\infty$, it follows that $\calU_T$ is finite, and by \eqref{eq-input-coder-controller}, $\calU_T$ contains only $L^1$ functions.

Now, for each $n\in\NNb_{>0}$, let $\sigma_n(\cdot)$ be the s.s.\ that oscillates between mode $1$ and mode $2$ with frequency $2/n$: that is, $\sigma_n(t)=1$ if $t\in[0,1/n)+2\NNb/n$, and $\sigma_n(t)=2$ if $t\in[1/n,2/n)+2\NNb/n$.
Then, by using an adaptation of the proof of the Riemann--Lebesgue lemma \cite[Corollary~14.5]{teschltopics}, one can show that for any $L^1$ function $f:[0,T)\to\Re$, it holds that $\int_0^T B_{\sigma_n(t)}f(t)\,\diff t\to 0$ when $n\to\infty$.
Since $\calU_T$ is finite, this implies that for any $\varepsilon>0$, there is $n\in\NNb_{>0}$ such that $\lvert\int_0^T B_{\sigma_n(t)}u(t)\,\diff t\rvert<\varepsilon$ for all $u\in\calU_T$.

Thus, for every $u\in\calU_T$, $\lvert x(T)\rvert>\lvert x(0)\rvert-\varepsilon$ where $x(\cdot)$ is a trajectory of \eqref{eq-SLS-cont} with s.s.\ $\sigma_n$ and with input $u$.
Since $T$ and $\varepsilon$ are arbitrary and $K$ contains at least one point $x(0)\neq0$ (since it has nonempty interior), this is a contradiction with the hypothesis that $(\gamma,\zeta)$ stabilizes the system.
\end{proof}

Summarizing, the above example shows that for the problem of limited data-rate stabilization of SLSs to be tractable, the switching signal cannot switch too rapidly (or at least not too rapidly during a too long period), as otherwise the uncertainty on the mode of the system, and especially on $B_\sigma$, between two transmission times ($T_k$, $T_{k+1}$) will be so large that the the system cannot be stabilized with a finite set of inputs.
This motivates the introduction of Assumption~1 which reduces the set of admissible switching signals by restricting the number of switches in bounded intervals.

%%%%%%%%%%%%%%%%%%%%%%%%%%%%%%%%%%%%%%%%%%%%%%%%%%%%%%%%%%%%%%%%%%%%%%%%%%%%%%%%
\subsection{Comparison with other works}\label{ssec-comparison}

Our work is strongly connected with \cite{liberzon2014finite,yang2017feedback,wakaiki2014output}, where the problem of limited data-rate stabilization of SLSs with constraints on the ADT is also considered.
The objectives regarding the design of a control strategy with finite data rate are however different.
In \cite{liberzon2014finite,yang2017feedback,wakaiki2014output}, the sampling frequency and the data rate of the coder--controller are fixed, and the objective is to obtain a sufficient lower bound on the ADT of the switching signal to ensure stabilization of the system.
In our work, however, we seek to obtain a coder--controller, with suitable sampling frequency and data rate, that stabilizes the system for any ADT such that Assumption~\ref{ass-feedback-stabilizable} holds.
In particular, if Assumption~\ref{ass-feedback-stabilizable} holds for every $\tau_a>0$, then for any given value of the ADT, we describe a coder--controller that stabilizes the system.
By contrast, in \cite{liberzon2014finite,yang2017feedback,wakaiki2014output}, even if the system admits a common CLF (which implies that Assumption~\ref{ass-feedback-stabilizable} holds for every $\tau_a>0$; see Subsection~\ref{ssec-SLSs}), the lower bound on the ADT does not converge to zero, even if the sampling frequency and the data rate tend to infinity.
The conservativeness of the lower bound on the ADT proposed in \cite{liberzon2014finite,yang2017feedback,wakaiki2014output} is mainly due to the fact that its derivation is based on the decrease of a multiple Lyapunov-like function for the sampled system.
In particular, the definition of this multiple Lyapunov-like function implies that each change of mode causes a nonzero ``jump'' in the value of the function; this results in nonzero lower bounds on the ADT, even if the system admits a common quadratic CLF; see, e.g., \cite[Eq.~(38)]{liberzon2014finite}.
By contrast, our approach relies on the convergence of a reference trajectory (by using Assumption~2 and a suitable sampling frequency) and the guarantee that the true state of the system does not stray too far from the reference trajectory (thanks to a suitable data rate); see Section~\ref{sec-coder-controller} below.
Last but not least, another difference of our framework with \cite{liberzon2014finite,yang2017feedback,wakaiki2014output} is that we do not impose any condition on the \emph{absolute} dwell time of the system.

%%%%%%%%%%%%%%%%%%%%%%%%%%%%%%%%%%%%%%%%%%%%%%%%%%%%%%%%%%%%%%%%%%%%%%%%%%%%%%%%
\section{Description of the coder--controller}\label{sec-coder-controller}

In this section, we describe the implementation of a coder--controller that stabilizes system~\eqref{eq-SLS-cont} under Assumptions~\ref{ass-ADT}--\ref{ass-feedback-stabilizable}.
The section is organized as follows.
First, we discuss the selection of the parameters of the coder--controller, which depend on the system and the quantities appearing in Assumptions~\ref{ass-ADT}--\ref{ass-feedback-stabilizable}.
Then, we present the implementations of the coder and the controller.
Finally, we show that the proposed coder--controller satisfies the assertions of Theorem~\ref{thm-coder-controller-stabilizes}.

%%%%%%%%%%%%%%%%%%%%%%%%%%%%%%%%%%%%%%%%%%%%%%%%%%%%%%%%%%%%%%%%%%%%%%%%%%%%%%%%
\subsection{Parameters definition}\label{ssec-parameters-definition}

Let $\nu = \frac{1}{2}\max_{i\in\Sigma}\,\lambda_{\mathrm{max}}(A_i^{}+A_i^\top)$, and let
\[
\Delta_1 = \max_{i,j\in\Sigma}\,\lVert A_i-A_j\rVert,\quad\Delta_2=\max_{i,j\in\Sigma}\,\lVert B_i-B_j\rVert.
\]
Also, define
\[
L=\max\,\{\,\lVert\varphi(\xi,i)\rVert : i\in\Sigma,\:\xi\in\Re^d,\:\lVert\xi\rVert=1\}.
\]
Pick $\tau_s>0$, $\alpha>0$ and $n\in\NNb$ such that
\begin{equation}\label{eq-parameters}
De^{-\mu_2 n\tau_s} + e^{\nu n\tau_s}\alpha + \varepsilon(n,\tau_s) < e^{-\mu_1n\tau_s/\tau_a}
\end{equation}
where $\varepsilon(n,\tau_s) = e^{\nu n\tau_s}\tau_s\frac{n\tau_s}{\tau_a}D(\Delta_1+\Delta_2L)$.%
\footnote{A strategy for choosing $\tau_s,\alpha,n$ could be: first, choose $T_s=n\tau_s$ large enough so that $De^{(\mu_1/\tau_a-\mu_2)T_s}<1$.
Then, for this $T_s$, choose $\alpha,n$ such that $e^{\nu T_s}\alpha$ and $\varepsilon(n,T_s/n)$ are small enough for \eqref{eq-parameters} to be satisfied.}

We will need the following lemma:

\begin{lemma}\label{lem-m-point-quantizer}
Let $\alpha>0$.
There is an $m$-point quantizer $Q:\Re^d\to\calQ\subseteq B(0,1)$ satisfying (i) $\lVert\xi-Q(\xi)\rVert\leq\alpha$ if $\lVert\xi\rVert\leq1$, (ii) $Q(\xi)=0$ if $\lVert\xi\rVert\leq\alpha/d^{1/2}$, and (iii)
\begin{equation}\label{eq-m-point-quantizer}
m=\lvert\calQ\rvert\leq\mh_\alpha\coloneqq\left(2\left\llbracket\frac{d^{1/2}}{2\alpha}\right\rrbracket + 1\right)^d.
\end{equation}
where $\llbracket\cdot\rrbracket$ is the \emph{rounding} (to the nearest integer) operator.
\end{lemma}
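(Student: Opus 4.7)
My plan is to construct $Q$ as the classical uniform cubic-lattice quantizer on $[-1,1]^d$, postcomposed with the Euclidean projection onto $B(0,1)$ to enforce $\calQ\subseteq B(0,1)$. Set $r=\llbracket d^{1/2}/(2\alpha)\rrbracket$, let $\beta=1/r$, and consider the grid $\calG=(\beta\ZZb\cap[-1,1])^d$, which contains exactly $(2r+1)^d$ points (including the origin). Let $\pi$ denote Euclidean projection onto the closed convex set $B(0,1)$, and set $\calQ=\pi(\calG)$. Then $\lvert\calQ\rvert\le\lvert\calG\rvert=(2r+1)^d=\mh_\alpha$, which is property~(iii). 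I define $Q$ by first clipping the input coordinate-wise to $[-1,1]^d$, then rounding each coordinate to the nearest element of $\beta\ZZb\cap[-1,1]$ to obtain a lattice point $p\in\calG$, and finally setting $Q(\xi)=\pi(p)\in\calQ$.

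To establish~(ii), note that $\beta/2=\alpha/d^{1/2}$; hence if $\lVert\xi\rVert\le\alpha/d^{1/2}$, then $|\xi_j|\le\beta/2$ for each $j$, nearest-neighbor rounding returns $p=0$, and $Q(\xi)=\pi(0)=0$. For~(i), any $\xi\in B(0,1)$ lies in $[-1,1]^d$ and so survives the clipping step unchanged. Componentwise rounding then yields $p\in\calG$ with $\lVert\xi-p\rVert\le\beta d^{1/2}/2\le\alpha$. Since $\pi$ is $1$-Lipschitz and $\pi(\xi)=\xi$, we conclude $\lVert\xi-Q(\xi)\rVert=\lVert\pi(\xi)-\pi(p)\rVert\le\lVert\xi-p\rVert\le\alpha$, which is~(i).

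The only delicate point is the identity $\beta d^{1/2}/2\le\alpha$: if $\llbracket\cdot\rrbracket$ is read strictly as nearest-integer rounding, it may round $d^{1/2}/(2\alpha)$ down by up to $1/2$, in which case $\beta$ slightly exceeds $2\alpha/d^{1/2}$. I expect to handle this bookkeeping either by interpreting $\llbracket\cdot\rrbracket$ in the ``ceiling-flavoured'' sense $\lfloor x+\tfrac12\rfloor$ (so that $r\ge d^{1/2}/(2\alpha)$ holds in the relevant regime $r\ge1$), or by absorbing the constant into $\alpha$ via a slightly smaller effective resolution. Beyond this, no genuine obstacle arises: the argument amounts to the standard covering-number bound for a cubic lattice combined with the $1$-Lipschitz property of projection onto a convex set.
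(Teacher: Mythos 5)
Your overall strategy---a cubic lattice in $[-1,1]^d$, coordinate-wise nearest-neighbor rounding, and Euclidean projection onto $B(0,1)$ using its $1$-Lipschitz property---is exactly the paper's construction. However, your specific choice of spacing introduces a genuine gap. You set $r=\llbracket d^{1/2}/(2\alpha)\rrbracket$ and then take spacing $\beta=1/r$, i.e.\ you rescale the lattice so that it spans $[-1,1]$ exactly. Since $\llbracket\cdot\rrbracket$ is nearest-integer rounding, $r$ can be smaller than $d^{1/2}/(2\alpha)$ by up to $1/2$, in which case $\beta>2\alpha/d^{1/2}$ and the half-diagonal $\beta d^{1/2}/2$ exceeds $\alpha$, so (i) fails; symmetrically, when $r$ rounds \emph{up}, $\beta/2<\alpha/d^{1/2}$ and your claimed identity $\beta/2=\alpha/d^{1/2}$ is false, so a point with $\lVert\xi\rVert\leq\alpha/d^{1/2}$ but some coordinate in $(\beta/2,\alpha/d^{1/2}]$ rounds to a nonzero lattice point and (ii) fails. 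You flag the first issue, but neither of your proposed patches repairs it: reading $\llbracket x\rrbracket$ as $\lfloor x+\tfrac12\rfloor$ still rounds down whenever the fractional part is below $\tfrac12$ (e.g.\ $x=2.4$ gives $2<x$), and shrinking the effective $\alpha$ replaces the count by $\bigl(2\lceil d^{1/2}/(2\alpha)\rceil+1\bigr)^d$, which can exceed $\mh_\alpha$ and so breaks (iii).

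The fix is the one the paper uses: keep the spacing fixed at exactly $\beta=2\alpha/d^{1/2}$ and take the lattice coordinates $\beta S$ with $S=\{-\llbracket 1/\beta\rrbracket,\ldots,\llbracket 1/\beta\rrbracket\}$, accepting that the lattice may stop short of $\pm1$. Since $\llbracket 1/\beta\rrbracket\geq 1/\beta-\tfrac12$, the outermost lattice coordinate is at least $1-\beta/2$, so every coordinate of a point in $B(0,1)$ is still within $\beta/2=\alpha/d^{1/2}$ of some element of $\beta S$; this yields (i) after projecting, and the half-spacing now equals $\alpha/d^{1/2}$ exactly, which gives (ii). The cardinality is $(2\llbracket 1/\beta\rrbracket+1)^d=\mh_\alpha$ on the nose. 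In short: do not rescale the grid to fill $[-1,1]$; let the rounding slack in the \emph{extent} of the lattice be absorbed by the half-cell tolerance rather than letting it corrupt the \emph{spacing}.
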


\extended{

\begin{proof}
See Appendix~\ref{ssec-lem-m-point-quantizer-proof}.
\end{proof}

}
\short{

\begin{proof}
See the extended version of this paper \cite{berger2020finite}.
\end{proof}

}

We will show that there is a coder--controller $(\gamma,\zeta)$, with sampling period $\tau_s$, that stabilizes the system and operates at data rate
\begin{equation}\label{eq-data-rate-coder-controller}
\kern-2pt R(\gamma,\zeta) = \frac{1}{\tau_s}\left[\frac{1}{n}\log_2\mh_\alpha + \frac{1}{n}\log_2(n+1) + \log_2\,\lvert\Sigma\rvert\right]\kern-2pt
\end{equation}
where $\mh_\alpha$ is as in \eqref{eq-m-point-quantizer}.

%%%%%%%%%%%%%%%%%%%%%%%%%%%%%%%%%%%%%%%%%%%%%%%%%%%%%%%%%%%%%%%%%%%%%%%%%%%%%%%%
\subsection{Coder implementation}\label{ssec-coder-implementation}

Let $\tau_s$, $\alpha$ and $n$ be as above, and let $Q(\cdot)$ be the quantizer associated to $\alpha$ as in Lemma~\ref{lem-m-point-quantizer}.
Also fix $r_0>0$ such that $K\subseteq B(0,r_0)$.%
\footnote{The assumption that the initial state of the system lays in a compact set known from the coder--controller is made for convenience and simplicity of the description of the coder--controller.
This assumption can be removed by using a ``zooming-out'' procedure as in \cite{liberzon2014finite}.}
The implementation of the coder is described in Figure~\ref{fig-coder-implementation}.

The implementation deserves the following explanations.
At every time $kn\tau_s$, $k\in\NNb_{>0}$, the coder computes the value of $\nmissed_k$ which is defined as the smallest integer in $\{0,\ldots,n\}$ such that $\beta_k$, defined by
\begin{equation}\label{eq-beta-defintion}
\beta_k = e^{\mu_1\nmissed_k}\psi+\alphab+e^{\mu_1\nmissed_k}\nmissed_k\epsb
\end{equation}
where $\psi=De^{-\mu_2n\tau_s}$, $\alphab=e^{\nu n\tau_s}\alpha$, $\epsb=e^{\nu n\tau_s}\tau_sD(\Delta_1+\Delta_2L)$, satisfies $\lVert x(kn\tau_s)\rVert\leq\beta_kr_{k-1}$.
We will see in Subsection~\ref{ssec-proof-coder-controller-correctness} that such an $\nmissed_k$ always exists.
Using this $\beta_k$, the coder updates the value of $r_k$ according to $r_k=\beta_kr_{k-1}$.
If $k=0$, simply use $\nmissed_0=0$ and $r_0$.

Using the above quantities, at time $kn\tau_s$, $k\in\NNb$, the coder sends a symbol that encodes the following information: (i)\ an approximation $\eta_k$ of the current state $x(kn\tau_s)$ scaled by $1/r_k$, using the quantizer $Q(\cdot)$, (ii)\ the current mode of the system, $\sigma(kn\tau_s)$, and (iii)\ the value of $\nmissed_k$.
Since $\eta_k$ can take at most $\mh_\alpha$ different values, $\sigma(kn\tau_s)$ at most $\lvert\Sigma\rvert$ values, and $\nmissed_k$ at most $n+1$ different values, it holds that a coding alphabet $\calE_{kn}$ of size $\log_2\mh_\alpha+\log_2(n+1)+\log_2\lvert\Sigma\rvert$ is sufficient to encode the symbol $e(kn\tau_s)$.
After this, at times $(kn+j)\tau_s$, $j\in\{1,\ldots,n-1\}$, the coder observes the current mode of the system and sends a symbol that encodes this mode.
For this, a coding alphabet $\calE_{kn+j}$ of size $\log_2\lvert\Sigma\rvert$ is sufficient.
Hence, it follows that the averaged communication data rate of the coder is equal to \eqref{eq-data-rate-coder-controller}.

%%%%%%%%%%%%%%%%%%%%%%%%%%%%%%%%%%%%%%%%%%%%%%%%%%%%%%%%%%%%%%%%%%%%%%%%%%%%%%%%
\subsection{Controller implementation}\label{ssec-controller-implementation}

Let $\tau_s$, $\alpha$ and $n$ be as in Subsection~\ref{ssec-parameters-definition}, and $Q(\cdot)$ and $r_0>0$ be as in Subsection~\ref{ssec-coder-implementation}.
The implementation of the controller is described in Figure~\ref{fig-controller-implementation}.
See also Figure~\ref{fig-iterates}, where the different quantities appearing in the implementation of the controller are represented.

\begin{figure}
\hrule
\vskip3pt
\noindent{\bfseries Initialization:} Let $\nmissed_0=0$, and let $r_0$ be as in Subsection~\ref{ssec-coder-implementation}.

\noindent{\bfseries Loop:} at time $kn\tau_s$ {\bfseries for} $k=0,1,2,\ldots$
\begin{itemize}[\nocalcleftmargintrue\leftmargin=12pt]
    \item Observe $x(kn\tau_s)$ and $\sigma(kn\tau_s)$.
    \item {\bfseries If} $k>0$: Let $\nmissed_k$ be the smallest integer in $\{0,\ldots,n\}$ such that $\beta_k$ defined by \eqref{eq-beta-defintion} with this $\nmissed_k$ satisfies $\lVert x(kn\tau_s)\rVert\leq\beta_kr_{k-1}$.
    Let $r_k=\beta_kr_{k-1}$.
    \item Let $\eta_k = Q(x(kn\tau_s)/r_k)$.
    \item Send a symbol $e(kn\tau_s)$ to the controller that encodes the triple $(\eta_k,\sigma(kn\tau_s),\nmissed_k)$.
    \item {\bfseries Loop:} at time $(kn+j)\tau_s$ {\bfseries for} $j=1,2,\ldots,n-1$
    \begin{itemize}[\nocalcleftmargintrue\leftmargin=12pt]
        \item Observe $\sigma((kn+j)\tau_s)$.
        \item Send a symbol $e((kn+j)\tau_s)$ to the controller that encodes $\sigma((kn+j)\tau_s)$.
    \end{itemize}
\end{itemize}\vskip2pt
% \vskip1pt
\hrule
\caption{Coder implementation.}
\label{fig-coder-implementation}
\end{figure}
\begin{figure}
\hrule
\vskip3pt
\noindent{\bfseries Initialization:} Let $\nmissed_0=0$, and let $r_0$ be as in Subsection~\ref{ssec-coder-implementation}.

\noindent{\bfseries Loop:} at time $kn\tau_s$ {\bfseries for} $k=0,1,2,\ldots$
\begin{itemize}[\nocalcleftmargintrue\leftmargin=12pt]
    \item Receive symbol $e(kn\tau_s)$ and decode $(\eta_k,\sigma(kn\tau_s),\nmissed_k)$.
    \item {\bfseries If} $k>0$: let $\beta_k$ be defined as in \eqref{eq-beta-defintion} with $\nmissed_k$ obtained from the symbol, and let $r_k=\beta_kr_{k-1}$.
    \item Let $\xi_{kn}=r_k\eta_k$.
    \item {\bfseries For} $t\in[kn\tau_s,(kn+1)\tau_s)$: apply the input
    \[
    u(t) = \varphi(\xh_{kn}(t),\sigma(kn\tau_s))
    \]
    where $\xh_{kn}(\cdot)$ is the solution of the auxiliary system \eqref{eq-auxiliary-system} with the boundary condition $\xh_{kn}(kn\tau_s)=\xi_{kn}$ and with $i=\sigma(kn\tau_s)$.
    \item {\bfseries Loop:} at time $(kn+j)\tau_s$ {\bfseries for} $j=1,2,\ldots,n-1$.
    \begin{itemize}[\nocalcleftmargintrue\leftmargin=12pt]
        \item Receive symbol $e((kn+j)\tau_s)$ and decode $\sigma((kn+j)\tau_s)$.
        \item Let $\xi_{kn+j} = \xh_{kn+j-1}((kn+j)\tau_s)$.
        \item {\bfseries For} $t\in[(kn+j)\tau_s,(kn+j+1)\tau_s)$: apply the input
        \[
        u(t) = \varphi(\xh_{kn+j}(t),\sigma((kn+j)\tau_s))
        \]
        where $\xh_{kn+j}(\cdot)$ is the solution of the auxiliary system \eqref{eq-auxiliary-system} with the boundary condition $\xh_{kn+j}(kn\tau_s)=\xi_{kn+j}$ and with $i=\sigma((kn+j)\tau_s)$.
    \end{itemize}\vskip0pt
\end{itemize}
% \vskip1pt
\hrule
\caption{Controller implementation.}
\label{fig-controller-implementation}
\vspace{-10pt}
\end{figure}

\begin{figure}
\centering
\includegraphics[width=0.9\linewidth]{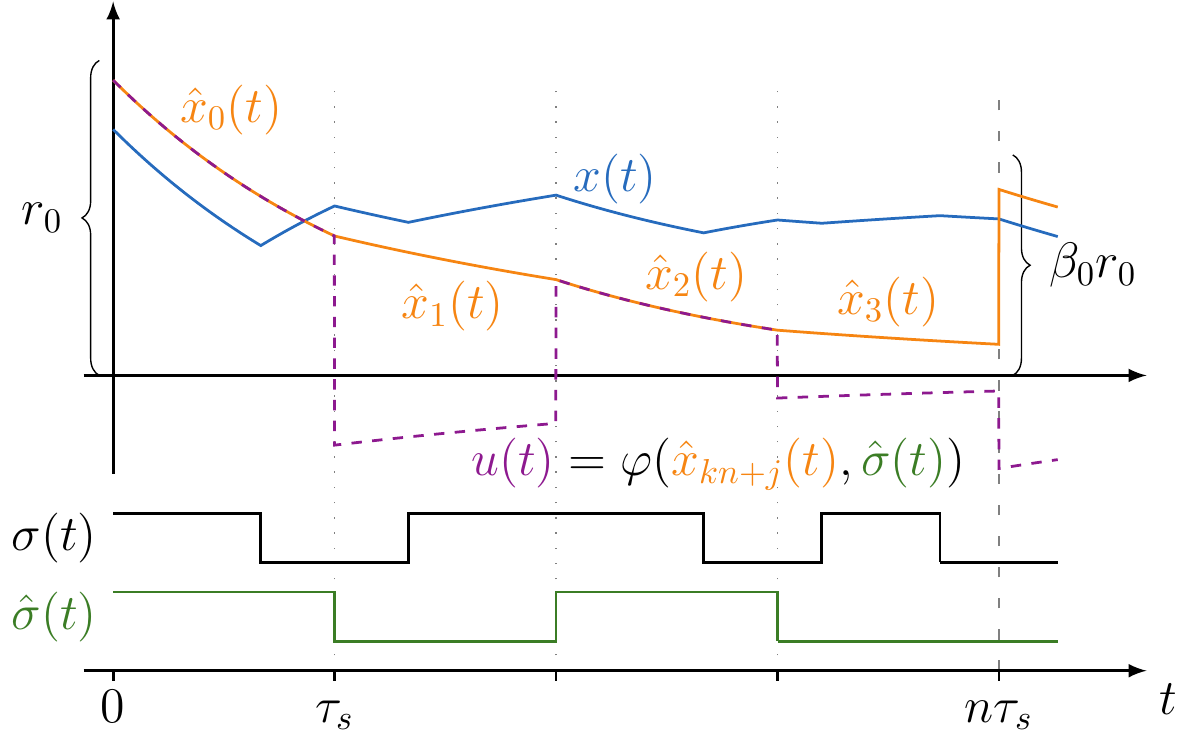}
\caption{
The different quantities involved in the implementation of the controller.
For this example, the coder--controller is applied on the system $\dot{x}(t)=B_{\sigma(t)}u(t)$, where $B_1=-1$ and $B_2=1$, and with $\varphi(x,1)=x$ and $\varphi(x,2)=-x/2$.
The switching signal $\sigma(\cdot)$ is represented in black, and the signal $\sigmah(\cdot)$ obtained from the sampled measurements $\sigma((kn+j)\tau_s)$ (decoded from the symbols $e((kn+j)\tau_s))$ is represented in green.}
\label{fig-iterates}
\vspace{-10pt}
\end{figure}

The implementation deserves the following explanations.
At each time $kn\tau_s$, $k\in\NNb$, the controller receives the symbol $e(kn\tau_s)$ that encodes $\eta_k$, $\sigma(kn\tau_s)$ and $\nmissed_k$.
Based on this, it is able to compute $\beta_k$ and $r_k$, and thus to compute $\xi_{kn}$ which is an approximation of the current state $x(kn\tau_s)$.
Then, on the interval $[kn\tau_s,(kn+1)\tau_s)$, the controller simulates the following auxiliary system:
\begin{equation}\label{eq-auxiliary-system}
\dot{\xh}(t) = A_i\xh(t) + B_i\varphi(\xh(t),i),
\end{equation}
with the boundary condition $\xh(kn\tau_s)=\xi_{kn}$ and with $i=\sigma(kn\tau_s)$.
Using the simulated trajectory, denoted by $\xh_{kn}(\cdot)$, the controller applies on the system the control input defined by $u(t)=\varphi(\xh_{kn}(t),i)$.

The same procedure is repeated for each $j\in\{1,\ldots,n-1\}$: $\xi_{kn+j}$ is defined as $\xh_{kn+j-1}((kn+j)\tau_s)$.
Then, on the interval $[(kn+j)\tau_s,(kn+j+1)\tau_s)$, the controller simulates the auxiliary system \eqref{eq-auxiliary-system} with the boundary condition $\xh((kn+j)\tau_s)=\xi_{kn+j}$ and with $i=\sigma((kn+j)\tau_s)$, decoded from the symbol $e((kn+j)\tau_s)$; and it applies on the system the control input defined by $u(t)=\varphi(\xh_{kn+j}(t),i)$ where $\xh_{kn+j}(\cdot)$ is the simulated trajectory.

%%%%%%%%%%%%%%%%%%%%%%%%%%%%%%%%%%%%%%%%%%%%%%%%%%%%%%%%%%%%%%%%%%%%%%%%%%%%%%%%
\subsection{Proof of the correctness of the coder--controller}\label{ssec-proof-coder-controller-correctness}

\short{See the extended version of this paper \cite{berger2020finite}.}

\extended{

In this subsection, we prove the correctness of the coder--controller.
Concretely, first we prove that for each $k\in\NNb$, there exists $\nmissed_k\in\{0,\ldots,n\}$ such that $\beta_k$ defined by \eqref{eq-beta-defintion} with this $\nmissed_k$ satisfies $\lVert x(kn\tau_s)\rVert\leq\beta_kr_{k-1}$.
Secondly, we prove that the coder--controller stabilizes the system.
For the sake of conciseness, in the following, we will sometimes use the abbreviations $t_k=kn\tau_s$ and $\Theta_k=[t_k,t_{k+1})$.

\begingroup
\renewcommand{\proofname}{Proof of Part~1 (existence of $\nmissed_k$)}
\begin{proof}
For each $j\in\NNb$, let $b^*_j$ be a boolean variable indicating whether or not the mode of the system has changed at least once during the interval $[j\tau_s,(j+1)\tau_s)$: that is, $b^*_j=1$ if $N_\sigma((j+1)\tau_s,j\tau_s)\geq1$, and $b^*_j=0$ otherwise.
For $k\in\NNb$, let $N^*_k = \sum_{j=0}^{n-1} b^*_{kn+j}$.
Clearly, it holds that $N^*_k\leq\min\{n,N_\sigma(t_{k+1},t_k)\}$.
We will show that $\nmissed_k\leq N^*_{k-1}$ (for all $k\geq1$).

Let $\sigmah(\cdot)$ be the switching signal defined by $\sigmah(t)=\sigma(j\tau_s)$ if $t\in[j\tau_s,(j+1)\tau_s)$ with $j\in\NNb$ (i.e., $\sigmah(\cdot)$ is the ``sample and hold'' of $\sigma(\cdot)$ with period $\tau_s$).
Then, for every $k\in\NNb$, it holds that $\int_{\Theta_k}\indfunc_{\sigmah\neq\sigma}(t)\,\diff t\leq N^*_k\tau_s$ where $\indfunc_{\sigmah\neq\sigma}(\cdot)$ is the indicator function of the set $\{t\geq0:\sigmah(t)\neq\sigma(t)\}$.

Fix $k\in\NNb$, and let $\xt_k(\cdot)$ be the solution of the ODE
\[
\left\{\begin{array}{l}
\dot{\xt}_k(t)=A_{\sigmah(t)}\xt_k(t)+B_{\sigmah(t)}\varphi(\xt_k(t),\sigmah(t)), \\
\xt_k(t_k)=\xi_{kn}, \quad t\in\Theta_k\coloneqq[t_k,t_{k+1}).
\end{array}\right.
\]
By Assumption~\ref{ass-feedback-stabilizable}, it holds that
\begin{equation}\label{eq-xt-stable}
\lVert\xt_k(t)\rVert\leq D\lVert\xi_{kn}\rVert e^{\mu_1 N^*_k-\mu_2(t-t_k)}, \quad \forall\,t\in\Theta_k.
\end{equation}
Moreover, by definition of $\xh_{kn+j}(\cdot)$ (see Subsection~\ref{ssec-controller-implementation}), it holds that for every $j\in\{0,\ldots,n-1\}$, $\xh_{kn+j}(t)=\xt_k(t)$ when $t\in[(kn+j)\tau_s,(kn+j+1)\tau_s)$.

Now, assume that $\lVert x(t_k)\rVert\leq r_k$.
We will use the above to show that $\nmissed_{k+1}\leq N^*_k$.
Indeed, by the definition of $\eta_k$ and $\xi_{kn}$, it holds that $\lVert\xi_{kn}\rVert\leq r_k$ and $\lVert x(t_k)-\xi_{kn}\rVert\leq \alpha r_k$.
Thus, by using a classical argument on the sensitivity of solutions of ODEs w.r.t.\ initial conditions and system parameters (see Lemma~\ref{lem-gronwall} in Appendix~\ref{ssec-lem-gronwall}), we get that
\begin{align*}  
& \lVert x(t_{k+1})-\xt_k(t_{k+1})\rVert \leq e^{\nu n\tau_s}\alpha r_k + \\
& \;\qquad\quad \int_{\Theta_k} e^{\nu(t_{k+1}-t)} \lVert \Delta A(t)\xt_k(t)+\Delta B(t) u(t)\rVert \,\diff t
\end{align*}
where $\Delta A(t) = A_{\sigma(t)}-A_{\sigmah(t)}$, $\Delta B(t)=B_{\sigma(t)}-B_{\sigmah(t)}$ and $u(t)=\varphi(\xt_k(t),\sigmah(t))$ ($u(t)$ is the input applied to the system at time $t$).
Combining the above with \eqref{eq-xt-stable} and the definition of $\Delta_1,\Delta_2,L$, we obtain
\begin{align}
& \lVert x(t_{k+1})-\xt_k(t_{k+1})\rVert \leq e^{\nu n\tau_s}\alpha r_k + \nonumber \\
& \qquad\qquad e^{\nu n\tau_s}\int_{\Theta_k} \indfunc_{\sigmah\neq\sigma}(t)(\Delta_1+\Delta_2 L)\lVert\xt_k(t)\rVert \,\diff t \nonumber \\[2pt]
& \quad \leq e^{\nu n\tau_s}\left[\alpha r_k + N^*_k\tau_s (\Delta_1+\Delta_2 L) Dr_k e^{\mu_1 N^*_k}\right]. \label{eq-diff-x-xt}
\end{align}
Moreover, by \eqref{eq-xt-stable}, $\lVert\xt_k(t_{k+1})\rVert\leq Dr_ke^{\mu_1 N^*_k-\mu_2n\tau_s}$.
Hence, combining with \eqref{eq-diff-x-xt}, we get that $\beta_{k+1}$, defined by \eqref{eq-beta-defintion} with $\nmissed_{k+1}=N^*_k$, satisfies $\lVert x(t_{k+1})\rVert\leq\beta_{k+1}r_k$.

To conclude the proof of Part~1: observe that since $k\in\NNb$ is arbitrary and since $\lVert x(t_k)\rVert\leq r_k$ holds true for $k=0$ (by definition of $r_0$), we obtain, with an inductive reasoning, that $\nmissed_k\leq N^*_{k-1}\leq n$ holds true for every $k\in\NNb_{>0}$.
\end{proof}
\endgroup

\begingroup
\renewcommand{\proofname}{Proof of Part~2 (stabilization property)}
\begin{proof}
By definition of $\beta_k$ and $r_k$, it holds that for all $k\in\NNb$, $\lVert x(t_k)\rVert\leq \beta^*_kr_0^{}$ where $\beta^*_k=\prod_{q=1}^k\beta_q^{}$.
We will show that $\beta^*_k\to0$ geometrically as $k\to\infty$.
Indeed, by the definition \eqref{eq-beta-defintion} of $\beta_q$, it holds that for all $k\in\NNb_{>0}$,
\begin{align*}
\textstyle \beta_k^* &\textstyle= e^{\mu_1\sum_{q=1}^k\nmissed_q}\prod_{q=1}^k \big(\psi + e^{-\mu_1\nmissed_q}\alphab +\nmissed_q\epsb\big) \\
&\textstyle\leq e^{\mu_1\sum_{q=1}^k\nmissed_q}\prod_{q=1}^k\big(\psi + \alphab + \nmissed_q\epsb\big).
\end{align*}
From Jensen's inequality,
\[
\textstyle
\prod_{q=1}^k(\psi+\alphab+\nmissed_q\epsb) \leq \big[\frac{1}{k}\sum_{q=1}^k(\psi+\alphab+\nmissed_q\epsb)\big]^k.
\]
Thus, using the fact that $\nmissed_q\leq N^*_{q-1}\leq N_\sigma(t_q,t_{q-1})$ for all $q\in\NNb_{>0}$ (see Part~1), we obtain that for all $k\in\NNb_{>0}$,
\[
\beta_k^* \leq (e^{\mu_1\frac{1}{k}N_\sigma(t_k,0)}\rho_k)^k,\quad\; \rho_k= \psi + \alphab + \tfrac{1}{k}N_\sigma(t_k,0)\epsb.
\]
Now, by Assumption~\ref{ass-ADT} and by condition \eqref{eq-parameters}, we have that $\limsup_{k\to\infty}\rho_k\leq\rhob<e^{-\mu_1n\tau_s/\tau_a}$ where $\rhob$ is the left-hand side term of \eqref{eq-parameters}.
On the other hand, by Assumption~\ref{ass-ADT} again, it holds that $\limsup_{k\to\infty} e^{\mu_1\frac{1}{k}N_\sigma(kn\tau_s,0)}\leq e^{\mu_1n\tau_s/\tau_a}$.
Thus, there is $C\geq0$ such that for every $k\in\NNb$, $\beta_k^*\leq Ce^{-\mu t_k}$ where 
\begin{equation}\label{eq-mu-lambda}
\mu = \mu_1/\tau_a - \log(\rhob)/(n\tau_s) > 0.
\end{equation}
It follows that for all $k\in\NNb$, $\lVert x(t_k)\rVert\leq r_0Ce^{-\mu t_k}$ and the same holds for $\lVert\xi_{kn}\rVert$ (since $\xi_{kn}=r_k\eta_k$ and $\lVert\eta_k\rVert\leq1$).

Now, to obtain an upper bound on $\lVert x(t)\rVert$, observe that from \eqref{eq-xt-stable}, there is $D'\geq0$ such that for every $k\in\NNb$, $\lVert \xt_k(t)\rVert\leq D'r_k$ for all $t\in\Theta_k$.
Since $\lVert\varphi(\xt_k(t),\sigmah(t))\rVert\leq L\lVert\xt_k(t)\rVert$, it follows that $\lVert u(t)\rVert\leq LD'r_k$ for all $t\in\Theta_k$.
Thus, by a similar argument as in the proof of Lemma~\ref{lem-gronwall} and since $\lVert x(t_k)\rVert\leq r_k$, there is $D''\geq0$ such that for every $k\in\NNb$, $\lVert x(t)\rVert\leq D''r_k$ for all $t\in\Theta_k$.
This shows that there is $C'\geq0$ such that
\begin{equation}\label{eq-exponential-convergence}
\lVert x(t)\rVert \leq C'e^{-\mu t} \qquad \forall\,t\geq0.
\end{equation}

Secondly, we claim that the system controlled by the coder--controller is \emph{Lyapunov stable}, meaning that there is a class-$\calK$ function $h(\cdot)$ such that every trajectory of \eqref{eq-SLS-cont} with input defined by \eqref{eq-symbol-coder-controller}--\eqref{eq-input-coder-controller} satisfies $\lVert x(t)\rVert\leq h(\lVert x(0)\rVert)$ for all $t\geq0$.
The proof of this claim is along the same lines as the proof of \cite[Theorem~1]{liberzon2005stabilization}, and thus, omitted here.%
\footnote{In fact, \cite[Theorem~1]{liberzon2005stabilization} shows Lyapunov stability in terms of the ``$\varepsilon$--$\delta$ definition''.
The equivalence of the ``$\varepsilon$--$\delta$ definition'' with the ``class-$\calK$ function definition'' can be found in \cite[Lemma~4.5]{khalil2002nonlinear}, \cite[Lemma~2.5]{clarke1998asymptotic}.}%
\footnote{Property (ii) in Lemma~\ref{lem-m-point-quantizer} and the fact that $\varphi(0,i)=0$ for all $i\in\Sigma$ are crucial here; see \cite[Theorem~1]{liberzon2005stabilization}.}

Finally, we combine the above Lyapunov stability property with the exponential decay property \eqref{eq-exponential-convergence}, to show that the coder--controller stabilizes the system in the sense of Theorem~\ref{thm-coder-controller-stabilizes}.
Therefore, let $\lambda$ be any real such that
\begin{equation}\label{eq-lambda-def}
0 < \lambda < \mu,
\end{equation}
where $\mu$ is as in \eqref{eq-mu-lambda}.
It is readily seen that $\lVert x(t)\rVert=\lVert x(t)\rVert^{1-\lambda/\mu}\lVert x(t)\rVert^{\lambda/\mu}\leq h(\lVert x(0)\rVert)^{1-\lambda/\mu}(C'e^{\mu t})^{\lambda/\mu}$ for all $t\geq0$.
Hence, we get the desired property, taking $g(r)=h(r)^{1-\lambda/\mu} {C'}^{\lambda/\mu}$ which is clearly a class-$\calK$ function.
This concludes the proof of Part~2.
\end{proof}
\endgroup

In this subsection, we have shown that the coder--controller defined in Subsections~\ref{ssec-parameters-definition}--\ref{ssec-controller-implementation} was well defined and that it satisfies the assertions of Theorem~\ref{thm-coder-controller-stabilizes}.
In the next section, we will demonstrate its practical applicability on a numerical example.

}

%%%%%%%%%%%%%%%%%%%%%%%%%%%%%%%%%%%%%%%%%%%%%%%%%%%%%%%%%%%%%%%%%%%%%%%%%%%%%%%%
\section{Numerical experiments}\label{sec-numerical}

Consider the SLS \eqref{eq-SLS-cont} with matrices $A_1=\setMatrix\left[\begin{array}{cc}0.1 & -1.0 \\ 1.5 & 0.1\end{array}\right]$, $A_2=\setMatrix\left[\begin{array}{cc}-0.5 & 2.0 \\ -1.5 & 0.0\end{array}\right]$, $B_1=\setMatrix\left[\begin{array}{c}1 \\ 1\end{array}\right]$, $B_2=\setMatrix\left[\begin{array}{c} 0 \\ 1\end{array}\right]$.
This system is stabilizable, in the sense of Assumption~\ref{ass-feedback-stabilizable}, via the feedback law $u(t) = K_{\sigma(t)}x(t)$ with $K_1=\setMatrix\left[\begin{array}{cc}-0.43 -0.43\end{array}\right]$ and $K_2=\setMatrix\left[\begin{array}{cc}-0.38 -0.52\end{array}\right]$, and with $D=1$, $\mu_1=0$ and $\mu_2=0.15$.

First, we have simulated the system with ADT $\tau_a = 1.0$~s.
We have used the values $\tau_s=0.008$, $\alpha=0.05$ and $n=100$ for the parameters of the coder--controller, which satisfy \eqref{eq-parameters}.
With these values of the parameters, the average data rate of the coder--controller is of $145$~bits/s.
A sample execution of the coder--controller, applied on the system with this ADT, is represented in Figure~\ref{fig-simulate}-(top).
We observe that the state of the system converges to zero, as predicted.

Then, we have simulated the system with a smaller ADT, namely $\tau_a = 0.25$~s.
We have used the values $\tau_s=0.002$, $\alpha=0.05$ and $n=400$ for the parameters of the coder--controller, which satisfy \eqref{eq-parameters}.
The average data rate of the coder--controller is of $523$~bits/s.
A sample execution of the coder--controller, applied on the system with this ADT, is represented in Figure~\ref{fig-simulate}-(bottom).
Again, we observe that the sampled trajectory converges to zero, as predicted.

\begin{figure}
\centering
\includegraphics[width=\linewidth]{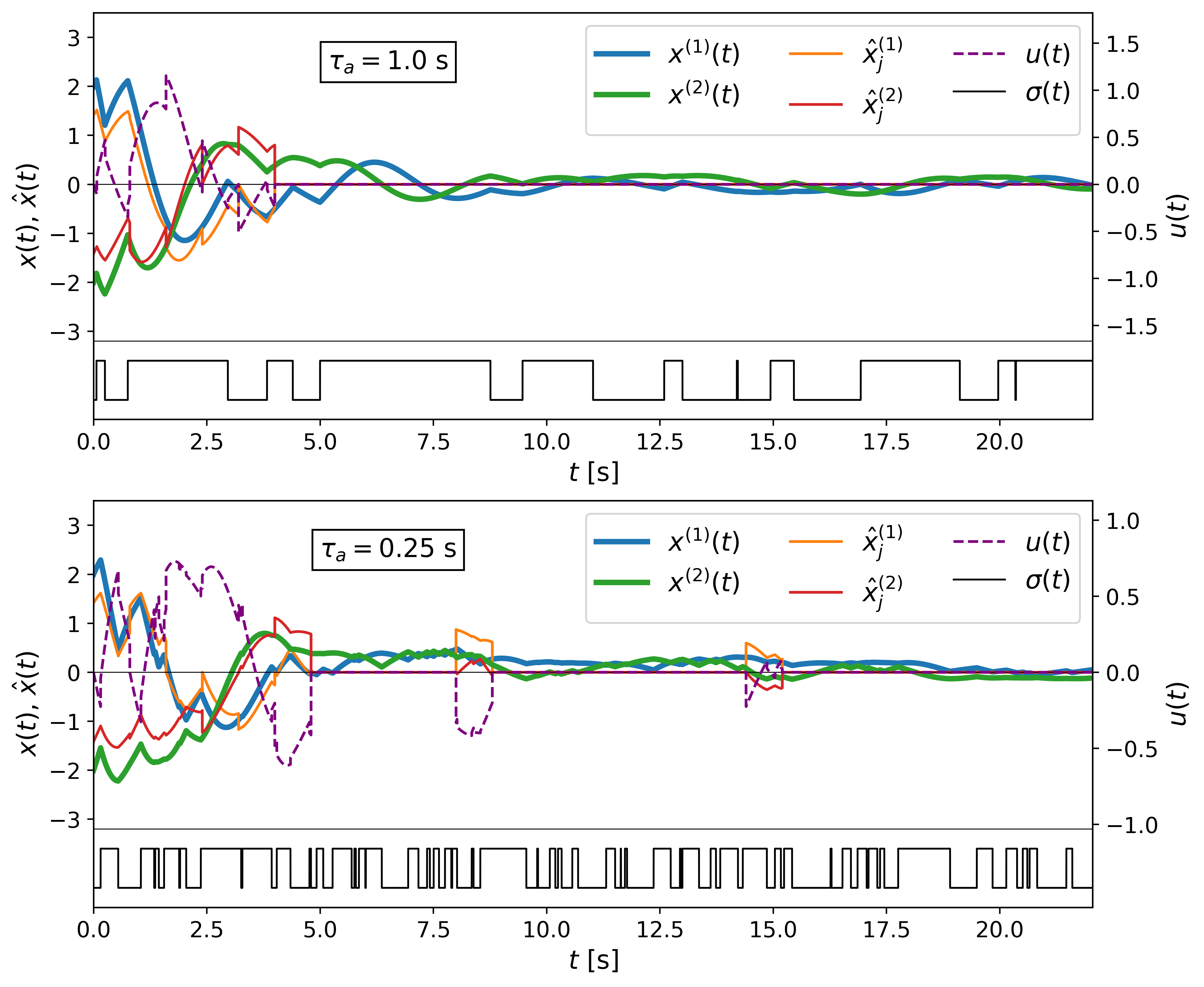}
\caption{Evolution of $x(t)$ and $u(t)$ for a sample execution of the coder--controller applied on the system presented in Section~\ref{sec-numerical}, with different values for the ADT.
The black curve below the plot represents the switching signal.
The orange and red curves represent the trajectories $\xh(t)$ simulated by the controller (see \eqref{eq-auxiliary-system}) to define the input $u(t)$.}
\label{fig-simulate}
\vspace{-10pt}
\end{figure}

%%%%%%%%%%%%%%%%%%%%%%%%%%%%%%%%%%%%%%%%%%%%%%%%%%%%%%%%%%%%%%%%%%%%%%%%%%%%%%%%
\section{Conclusions}

In this paper, we have first shown that continuous-time switched linear systems with arbitrary switching have in general an \emph{infinite stabilization entropy}, meaning that they cannot be stabilized with any finite data rate.
This motivated the introduction of a fairly mild slow-switching assumption on the system.
This assumption requires that the switching signal has an average dwell time bounded away from zero.
Under this assumption, switching linear systems that are stabilizable in the absence of data-rate constraints are stabilizable by a coder--controller with finite data rate.
We have described the implementation of such a coder--controller and demonstrated its applicability on a numerical example.
In future works, the question of potential improvement of the data rate of the coder--controller will be investigated.
As a few examples, one could introduce the use of Lyapunov functions, as in \cite{liberzon2014finite,yang2017feedback,wakaiki2014output}; refine the analysis of the propagation of reachable sets during sampling intervals, by using tools from multilinear algebra, as in \cite{berger2020worstcase}; consider additional assumptions on the system.

%%%%%%%%%%%%%%%%%%%%%%%%%%%%%%%%%%%%%%%%%%%%%%%%%%%%%%%%%%%%%%%%%%%%%%%%%%%%%%%%
\section{Acknowledgments}

The authors would like to thank Daniel Liberzon (University of Illinois in Urbana-Champaign) for insightful discussions on the results presented in the paper.
The first author is also grateful to Prof.~Liberzon for his hospitality while visiting the Dept.~of Electrical and Computer Engineering at UIUC.

\bibliographystyle{plain}
\bibliography{myrefs}

\begin{thebibliography}{10}

\bibitem{berger2020worstcase}
Guillaume~O Berger and Rapha{\"e}l~M Jungers.
\newblock Worst-case topological entropy and minimal data rate for state
  observation of switched linear systems.
\newblock In {\em Proceedings of the 23rd International Conference on Hybrid
  Systems: Computation and Control}, pages 1--11. ACM, 2020.

\bibitem{clarke1998asymptotic}
Francis~H Clarke, Yu~S Ledyaev, and Ronald~J Stern.
\newblock Asymptotic stability and smooth {Lyapunov} functions.
\newblock {\em Journal of Differential Equations}, 149(1):69--114, 1998.

\bibitem{colonius2012minimal}
Fritz Colonius.
\newblock Minimal bit rates and entropy for exponential stabilization.
\newblock {\em SIAM Journal on Control and Optimization}, 50(5):2988--3010,
  2012.

\bibitem{hespanha1999stability}
Jo{\~a}o~P Hespanha and A~Stephen Morse.
\newblock Stability of switched systems with average dwell-time.
\newblock In {\em Proceedings of the 38th IEEE conference on decision and
  control (Cat. No. 99CH36304)}, volume~3, pages 2655--2660. IEEE, 1999.

\bibitem{hespanha2007asurvey}
Jo{\~a}o~P Hespanha, Payam Naghshtabrizi, and Yonggang Xu.
\newblock A survey of recent results in networked control systems.
\newblock {\em Proceedings of the IEEE}, 95(1):138--162, 2007.

\bibitem{jungers2009thejoint}
Rapha{\"e}l~M Jungers.
\newblock {\em The joint spectral radius: theory and applications}, volume 385
  of {\em Lecture Notes in Control and Information Sciences}.
\newblock Springer-Verlag Berlin Heidelberg, 2009.

\bibitem{khalil2002nonlinear}
Hassan~K Khalil.
\newblock {\em Nonlinear systems (3rd ed.)}.
\newblock Prentice-Hall, Upper Saddle River, NJ, 2002.

\bibitem{liberzon2003switching}
Daniel Liberzon.
\newblock {\em Switching in systems and control}.
\newblock Systems \& Control: Foundations \& Applications. Birkh\"auser,
  Boston, MA, 2003.

\bibitem{liberzon2014finite}
Daniel Liberzon.
\newblock Finite data-rate feedback stabilization of switched and hybrid linear
  systems.
\newblock {\em Automatica}, 50(2):409--420, 2014.

\bibitem{liberzon2005stabilization}
Daniel Liberzon and Jo{\~a}o~P Hespanha.
\newblock Stabilization of nonlinear systems with limited information feedback.
\newblock {\em IEEE Transactions on Automatic Control}, 50(6):910--915, 2005.

\bibitem{lin2009stability}
Hai Lin and Panos~J Antsaklis.
\newblock Stability and stabilizability of switched linear systems: a survey of
  recent results.
\newblock {\em IEEE Transactions on Automatic Control}, 54(2):308--322, 2009.

\bibitem{ling2010necessary}
Qiang Ling and Hai Lin.
\newblock Necessary and sufficient bit rate conditions to stabilize quantized
  {Markov} jump linear systems.
\newblock In {\em Proceedings of the 2010 American Control Conference}, pages
  236--240. IEEE, 2010.

\bibitem{matveev2009estimation}
Alexey~S Matveev and Andrey~V Savkin.
\newblock {\em Estimation and control over communication networks}.
\newblock Control Engineering. Birkh\"auser Basel, 2009.

\bibitem{nair2003infimum}
Girish~N Nair, Subhrakanti Dey, and Robin~J Evans.
\newblock Infimum data rates for stabilising {Markov} jump linear systems.
\newblock In {\em 42nd IEEE International Conference on Decision and Control
  (IEEE Cat. No.03CH37475)}, volume~2, pages 1176--1181. IEEE, 2003.

\bibitem{shorten2007stability}
Robert Shorten, Fabian Wirth, Oliver Mason, Kai Wulff, and Christopher King.
\newblock Stability criteria for switched and hybrid systems.
\newblock {\em SIAM Review}, 49(4):545--592, 2007.

\bibitem{sun2011stability}
Zhendong Sun and Shuzhi~Sam Ge.
\newblock {\em Stability theory of switched dynamical systems}.
\newblock Communications and Control Engineering. Springer-Verlag London, 2011.

\bibitem{teschltopics}
Gerald Teschl.
\newblock {\em Topics in real and functional analysis}.
\newblock Graduate studies in mathematics. American Mathematical Society.
\newblock To appear.

\bibitem{van2000anintroduction}
Arjan~J Van Der~Schaft and Johannes~Maria Schumacher.
\newblock {\em An introduction to hybrid dynamical systems}, volume 251 of {\em
  Lecture Notes in Control and Information Sciences}.
\newblock Springer-Verlag London, 2000.

\bibitem{wakaiki2014output}
Masashi Wakaiki and Yutaka Yamamoto.
\newblock Output feedback stabilization of switched linear systems with limited
  information.
\newblock In {\em 53rd IEEE Conference on Decision and Control}, pages
  3892--3897. IEEE, 2014.

\bibitem{xiao2010stabilization}
Nan Xiao, Lihua Xie, and Minyue Fu.
\newblock Stabilization of {Markov} jump linear systems using quantized state
  feedback.
\newblock {\em Automatica}, 46(10):1696--1702, 2010.

\bibitem{yang2017feedback}
Guosong Yang and Daniel Liberzon.
\newblock Feedback stabilization of switched linear systems with unknown
  disturbances under data-rate constraints.
\newblock {\em IEEE Transactions on Automatic Control}, 63(7):2107--2122, 2017.

\bibitem{zhang2009stabilization}
Chun Zhang, Kan Chen, and Geir~E Dullerud.
\newblock Stabilization of {Markovian} jump linear systems with limited
  information --- a convex approach.
\newblock In {\em 2009 American Control Conference}, pages 4013--4019. IEEE,
  2009.

\bibitem{zhongping2013quantized}
Jiang Zhong-Ping and Liu Teng-Fei.
\newblock Quantized nonlinear control --- a survey.
\newblock {\em Acta Automatica Sinica}, 39(11):1820--1830, 2013.

\end{thebibliography}

\extended{

\appendix

%%%%%%%%%%%%%%%%%%%%%%%%%%%%%%%%%%%%%%%%%%%%%%%%%%%%%%%%%%%%%%%%%%%%%%%%%%%%%%%%
\subsection{Proof of Lemma~\ref{lem-m-point-quantizer}}\label{ssec-lem-m-point-quantizer-proof}

Let $S=\{-\llbracket 1/\beta\rrbracket,\ldots,\llbracket 1/\beta\rrbracket\}$ where $\beta=2\alpha/d^{1/2}$.
Clearly, $\lvert S\rvert=2\llbracket 1/\beta\rrbracket+1$.
Then, define $\calQ'=\beta S\times\ldots\times\beta S$ ($d$ times) and $\calQ$ be the orthogonal projection of $\calQ'$ on $B(0,1)$.
Let $Q(\xi)$ be defined as the closest point in $\calQ$ to $\xi$.
Then, it is clear that $Q(\cdot)$ satisfies (i)--(iii).

%%%%%%%%%%%%%%%%%%%%%%%%%%%%%%%%%%%%%%%%%%%%%%%%%%%%%%%%%%%%%%%%%%%%%%%%%%%%%%%%
\subsection{A useful lemma}\label{ssec-lem-gronwall}

The following lemma is inspired from well-known results on the sensitivity of solutions of ODEs to initial conditions and system parameters (see, e.g., \cite[Theorem~3.4]{khalil2002nonlinear}), with a specific adaptation to the case of linear ODEs:

\begin{lemma}\label{lem-gronwall}
Let $x_i:\Re_{\geq0}\to\Re^d$, $i=1,2$, be the solutions of the respective ODEs
\[
\dot{x_i}(t) = A_i(t)x_i(t) + u_i(t), \quad t\geq0, \quad i=1,2,
\]
where $A_i(t)\in\Re^\dd$ and $u_i(t)\in\Re^d$ for all $t\geq0$.
Let $\nu$ be such that $\nu\geq\frac{1}{2}\lambda_{\mathrm{max}}(A_i(t)+A_i(t)^\top)$ for all $t\geq0$ and $i\in\{1,2\}$.
Then, it holds that
\begin{align*}
\lVert x_1(t)-x_2(t)\rVert &\leq e^{\nu t}\,\lVert x_1(0)-x_2(0)\rVert \,+ \\
& \int_0^t e^{\nu(t-s)}\lVert \Delta A(s)x_2(s)+\Delta u(s)\rVert\,\diff s,
\end{align*}
where $\Delta A(s)=A_1(s)-A_2(s)$, $\Delta u(s)=u_1(s)-u_2(s)$.
\end{lemma}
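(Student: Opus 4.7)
The plan is to reduce the problem to a standard variation-of-constants argument combined with a log-norm (one-sided Lipschitz) estimate on the transition matrix of $A_1(\cdot)$.

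First, I would set $e(t) = x_1(t) - x_2(t)$ and derive the ODE it satisfies. Subtracting the two given ODEs and rewriting $A_1 x_1 - A_2 x_2 = A_1 (x_1 - x_2) + (A_1 - A_2) x_2$ gives
\[
\dot e(t) = A_1(t)\,e(t) + v(t), \qquad v(t) \coloneqq \Delta A(t)\, x_2(t) + \Delta u(t).
\]
Let $\Phi_1(t,s)$ denote the state transition matrix of the time-varying linear homogeneous system $\dot\xi = A_1(t)\xi$. The variation-of-constants formula then yields
\[
e(t) = \Phi_1(t,0)\,e(0) + \int_0^t \Phi_1(t,s)\,v(s)\,\diff s,
\]
so that the lemma follows once I establish $\lVert \Phi_1(t,s)\rVert \leq e^{\nu(t-s)}$.

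Second, I would prove this operator-norm bound via an energy estimate. For fixed $s$ and arbitrary $y\in\Re^d$, let $\xi(t) = \Phi_1(t,s)\,y$. Then
\[
\tfrac{\diff}{\diff t}\lVert \xi(t)\rVert^2 = \langle \xi(t),\,(A_1(t)+A_1(t)^\top)\,\xi(t)\rangle \leq 2\nu\,\lVert\xi(t)\rVert^2
\]
by the hypothesis on $\nu$. Gronwall's inequality (or direct integration) then gives $\lVert\xi(t)\rVert\leq e^{\nu(t-s)}\lVert y\rVert$, whence $\lVert\Phi_1(t,s)\rVert \leq e^{\nu(t-s)}$. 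Substituting this into the variation-of-constants representation and applying the triangle inequality to the integral yields exactly the inequality claimed in the lemma.

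There is no genuine obstacle; the only point that requires a bit of care is choosing to pull out $A_1$ (rather than $A_2$) in the rewriting of the error dynamics so that the transition matrix appearing in the formula is one for which the $\nu$-bound applies, which is why the hypothesis is symmetrically stated for $i\in\{1,2\}$ (by symmetry one could equally well pull out $A_2$, producing the analogous bound with $x_1$ inside the integrand). The role of $\nu$ as a one-sided log-norm bound, rather than the cruder choice $\lVert A_1(t)\rVert$, is what keeps the exponent sharp and matches the use of this lemma in equation~\eqref{eq-diff-x-xt} of the main proof.
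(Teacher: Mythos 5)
Your proof is correct, and it rests on the same two ingredients as the paper's: the decomposition $A_1x_1-A_2x_2=A_1(x_1-x_2)+\Delta A\,x_2$, and the use of $\nu$ as a one-sided (log-norm) bound on $A_1(t)$. The routes diverge only in how the conclusion is extracted. The paper works directly with the scalar quantity $h(t)=\lVert x_1(t)-x_2(t)\rVert$, shows $\frac{\diff}{\diff t}\bigl(e^{-\nu t}h(t)\bigr)\leq \lVert\Delta A(t)x_2(t)+\Delta u(t)\rVert$ by bounding $w^\top\dot w\leq \nu\lVert w\rVert^2+\lVert\Delta A x_2+\Delta u\rVert\,\lVert w\rVert$, and integrates; you instead write the error via variation of constants with the transition matrix $\Phi_1(t,s)$ of $\dot\xi=A_1(t)\xi$ and prove $\lVert\Phi_1(t,s)\rVert\leq e^{\nu(t-s)}$ by the same energy estimate. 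The two are essentially interchangeable, but your version has a small technical advantage: the paper's computation divides by $h(t)$, which is delicate at points where $x_1(t)=x_2(t)$ (the norm is not differentiable there), whereas the variation-of-constants representation plus the triangle inequality avoids that issue entirely. Your closing remark about why one pulls out $A_1$ rather than, say, using $\lVert A_1(t)\rVert$ in the exponent correctly identifies the point of the lemma as it is used in \eqref{eq-diff-x-xt}.
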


\begin{proof}
Let $w(t)=x_1(t)-x_2(t)$ and $h(t)=\lVert w(t)\rVert$.
Then, $\frac{\diff}{\diff t}(e^{-\nu t}h(t)) = -\nu e^{-\nu t}h(t) + e^{-\nu t} [w(t)^\top\dot{w}(t)]/h(t)$.
It holds that (for simplicity of notation, the dependence on $t$ is assumed implicitly)
\begin{align*}
w^\top \dot{w} & = w^\top[A_1x_1-A_2x_2 + u_1-u_2]  \\
& = w^\top[A_1w + \Delta Ax_2 + \Delta u] \\
& = \frac{1}{2}(w^\top\!A_1^{}w+w^\top\!A_1^\top w) + w^\top(\Delta Ax_2 + \Delta u) \\
& \leq \nu\lVert w\rVert^2 + \lVert \Delta Ax_2 + \Delta u\rVert \lVert w\rVert.
\end{align*}
Injecting this in the expression of $\frac{\diff}{\diff t}(e^{-\nu t}h(t))$, we obtain that for all $t\geq0$, $\frac{\diff}{\diff t}(e^{-\nu t}h(t)) \leq \lVert \Delta A(t)x_2(t) + \Delta u(t)\rVert$.
The conclusion of the proof then follows by integration, and multiplication by $e^{\nu t}$.
\end{proof}

}

\end{document}